\crefname{enumi}{part}{parts}
\newtheorem{theorem}{Theorem}[section]
\newtheorem{lemma}[theorem]{Lemma}
\newtheorem{proposition}[theorem]{Proposition}
\theoremstyle{definition}
\newtheorem{definition}[theorem]{Definition} 
\newtheorem{notation}[theorem]{Notation}
\DeclarePairedDelimiter\ceil{\lceil}{\rceil}
\DeclarePairedDelimiter\floor{\lfloor}{\rfloor}
\newcommand{\Sym}[0]{\mathrm{Sym}}
\newcommand{\Aut}[0]{\mathrm{Aut}}
\newcommand{\sym}[2]{\mathrm{S}_{#1,#2}}
\newcommand{\alt}[2]{\mathrm{A}_{#1,#2}}
\newcommand{\ksubsets}[2]{[#2]_#1}
\newcommand{\soc}[1]{\mathrm{soc}(#1)}
\newcommand{\normOfSoc}[0]{M}
\newcommand{\Sy}[0]{\mathrm{S}}
\newcommand{\Al}[0]{\mathrm{A}}
\newcommand{\Amkl}[0]{A(m,k,l)}
\title{Primitive normalisers in quasipolynomial time}
\date{\today}
\author[1]{Mun See Chang}
\author[2]{Colva M.\ Roney-Dougal}
\affil[1]{School of Computer Science, University of St Andrews}
\affil[2]{School of Mathematics and Statistics, University of St Andrews}
\affil[ ]{\texttt {\{msc2,Colva.Roney-Dougal\}@st-andrews.ac.uk}}
\begin{document}

\maketitle

%%%%%%%%%%%%%%%%%%%%%%%%%%%%%%%%%%%%%%%%%%%%%%%%%%%%%%%%%%%%%%%%%%%%%%%%%%%%%%%%%%%%%%%%%%%%%%
\begin{abstract}
The normaliser problem has as input two subgroups $H$ and $K$ of the symmetric group $\Sy_n$, and asks for a generating set for $N_K(H)$: it is not known to have a subexponential time solution.  It is proved in \cite{normOfPrim} that if $H$ is primitive then the normaliser problem can be solved in quasipolynomial time. We show that for all subgroups $H$ and $K$ of $\Sy_n$, in quasipolynomial time we can decide whether $N_{\Sy_n}(H)$ is primitive, and if so compute $N_K(H)$. Hence we reduce the question of whether one can solve the normaliser problem in quasipolynomial time to the case where the normaliser is known not to be primitive.
\end{abstract}

%%%%%%%%%%%%%%%%%%%%%%%%%%%%%%%%%%%%%%%%%%%%%%%%%%%%%%%%%%%%%%%%%%%%%%%%%%%%%%%%%%%%%%%%%%%%%%

\section{Introduction}
\label{section: intro}

The \emph{normaliser problem} asks for a generating set for $N_K(H)$, given subgroups $K$ and $H$ of $\Sy_n$. 
It is shown in \cite{wiebking} that the problem can be solved in simply exponential time $2^{O(n)}$, but there is no known subexponential solution to the general problem. 
A permutation group problem $\mathcal{P}$ is said to be \emph{quasipolynomial} if there exists a constant $c$ such that $\mathcal{P}$ can be solved in time $2^{O(\log^c{n})}$, where $n$ is the degree of the underlying group or groups. 

It is shown in \cite{normOfPrim} that if $H$ is primitive, then the normaliser problem is quasipolynomial. In this paper, we will show that if $N_{\Sy_n}(H)$ is primitive then the normaliser problem is quasipolynomial. Our main theorem is the following. 

\begin{theorem} \label{norm is prim in poly time} \label{main theorem}
Let subgroups $H = \langle X \rangle$ and $K = \langle Y \rangle$ of $\Sy_n$ be given.  
\begin{enumerate}
    \item \label{decide if norm in prim} We can decide if $N=N_{\Sy_n}(H)$ is primitive, and if so construct $N$, in time $2^{O(\log^3{n})}$.  
    \item \label{if norm prim then compute norm} If $N$ is primitive, then we can compute $N_K(H)$ in time $2^{O(\log^3{n})}$. 
\end{enumerate}
\end{theorem}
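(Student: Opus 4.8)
The plan rests on two structural facts: a nontrivial normal subgroup of a primitive group is transitive, and the socle of a primitive group is characteristic, so by the O'Nan--Scott theorem it pins down the whole group. Write $N = N_{\Sy_n}(H)$. I first dispose of the degenerate cases. If $H = 1$ then $N = \Sy_n$ is primitive and we output it. If $H \neq 1$, I compute the orbits of $H$: since a nontrivial normal subgroup of a primitive group is transitive, if $H$ is intransitive then $N$ cannot be primitive, and we return this answer. Hence assume $H$ is transitive and nontrivial, whence $C := C_{\Sy_n}(H)$ is semiregular.

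For part (\ref{decide if norm in prim}) the heart is to locate $M := \soc{N}$. Since $M$ is characteristic in $N$ we have $N \le N_{\Sy_n}(M)$, and the O'Nan--Scott type of $N$ lets us recover $M$ from $H$ up to a short list of candidates. When $N$ has a unique minimal normal subgroup (the affine, almost simple, diagonal, product-action and twisted-wreath types) every nontrivial normal subgroup of $N$ contains $M$, so $M \le H$; as $M = T^k$ with $T$ simple, $M$ is then a transitive, characteristically simple product of minimal normal subgroups of $H$, hence a subproduct of $\soc{H}$. In the holomorph-of-(compound-)simple types $H$ is a regular normal factor whose complement is $C$, so $M = \langle H, C\rangle$. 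Either way I obtain a quasipolynomially bounded list of candidate socles $M$, each characteristically simple with a prescribed action and with $k = O(\log n)$ factors. For each candidate I compute $P := N_{\Sy_n}(M)$ in quasipolynomial time from the explicit structure of $M$ — assembling $P$ as a holomorph $M \rtimes \Aut(M)$, as a wreath product, or via \cite{normOfPrim} applied to a primitive component — and then compute $N^* := N_P(H)$, exploiting that $P/M$ is quasipolynomial.

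The decision now follows from a single observation: if some candidate gives a primitive $N^*$, then, since $N^* = N_P(H) \le N$ is transitive, $N$ is transitive and every block system of $N$ is one for the primitive group $N^*$, hence trivial, so $N$ is primitive; conversely, if $N$ is primitive then for $M = \soc{N}$ we have $N \le N_{\Sy_n}(M) = P$, so $N^* = N_P(H) = N$ is primitive. Thus $N$ is primitive if and only if some candidate yields a primitive $N^*$, and the candidate satisfying $\soc{N^*} = M$ returns $N$ itself; primitivity testing and socle computation for explicit groups are polynomial-time. For part (\ref{if norm prim then compute norm}), note $N_K(H) = K \cap N$ and $N_K(H) \le N_K(N)$, because an element of $K$ normalising $H$ normalises $N_{\Sy_n}(H) = N$. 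As $N$ is primitive, \cite{normOfPrim} computes $K_1 := N_K(N)$ in time $2^{O(\log^3 n)}$. Finally $K_1$ normalises $N$ and so permutes its normal subgroups; the $K_1$-conjugates of $H$ are all normal subgroups of $N$, of which there are only quasipolynomially many, so an orbit--stabiliser computation yields $N_K(H) = N_{K_1}(H)$.

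The main obstacle is part (\ref{decide if norm in prim}): recovering $\soc{N}$ from $H$ uniformly across the O'Nan--Scott types — in particular the affine, diagonal and twisted-wreath cases, where the relationship between $H$ and the socle is least transparent — and then computing $N_P(H)$ inside the structured group $P$ within the time bound. The latter must exploit that $P/M$ is quasipolynomially small, together with the explicit decomposition of $P$, rather than invoking a general normaliser algorithm, which is exactly the computation not known to be subexponential.
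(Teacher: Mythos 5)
Your proposal takes a genuinely different route from the paper --- reconstructing $\soc{N}$ case-by-case through the O'Nan--Scott types --- and the step you yourself flag as the ``main obstacle'' is a real gap, not a deferred detail. The assertion that you ``obtain a quasipolynomially bounded list of candidate socles'' is never proved, and no enumeration procedure is given. In the unique-minimal-normal-subgroup case your candidates are the transitive, characteristically simple products of minimal normal subgroups of $H$; but a transitive group $H$ can have polynomially many minimal normal subgroups (a regular elementary abelian $2$-group of degree $n$ has $n-1$ of them), so the subsets whose products you would have to consider number $2^{\Omega(n)}$, and you give no argument that only quasipolynomially many of them are characteristically simple and transitive, nor any way to list those without passing through all subsets. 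The per-type recipes also fail at the edges: in the HS/HC case, if $H$ properly contains one regular minimal normal subgroup $M_1$, meets the other one $M_2$ trivially, but is not contained in $\soc{N}$, then $C = M_2$ and $\langle H, C\rangle = HM_2 \supsetneq M_1M_2 = \soc{N}$, so your recipe overshoots and does not even produce a characteristically simple group; and in the simple-diagonal case with $k \geq 3$ simple factors, the candidate $M = T^k$ is transitive, imprimitive and non-regular, so none of your three constructions of $P = N_{\Sy_n}(M)$ (holomorph, wreath product, or \cite{normOfPrim} applied to a primitive component) applies as stated. Your affine-case claim that $\soc{N}$ is a product of minimal normal subgroups of $H$ also silently needs Clifford's theorem; that one is true but should be said.

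Part (\ref{if norm prim then compute norm}) has a quantitative problem as well: the orbit--stabiliser step needs the $K_1$-orbit of $H$ to be small, which you justify by ``quasipolynomially many normal subgroups of $N$'' --- unproven, and the natural bound (the conjugates all contain $\soc{N}$, hence inject into the subgroups of $N/\soc{N}$, a group of order up to roughly $n^{\log n}$ in the affine case) gives about $2^{O(\log^4 n)}$, overshooting the claimed $2^{O(\log^3 n)}$. The paper avoids every one of these difficulties. For part (\ref{if norm prim then compute norm}) it simply computes $N_K(H) = K \cap N$, since intersection reduces to setwise stabilisation, a special case of string isomorphism solvable in $2^{O(\log^3 n)}$ by Babai--Helfgott \cite{babaiGI, helfgottGI, luksHierarchy}. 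For part (\ref{decide if norm in prim}) it never reconstructs $\soc{N}$ across O'Nan--Scott types: by Mar\'oti's theorem (\Cref{big small and sporadic primitive groups}) a primitive $N$ is Mathieu, large, or small. The large case is handled by the ample condition on $H$ (\Cref{def: ample group}), where $\soc{H} = \soc{N}$ is permutation isomorphic to $\Amkl$ and its normaliser is an explicit wreath product; and all the types your approach struggles with --- affine, diagonal, twisted wreath, HS/HC --- are swept into the small case $|N| < n^{1+\floor{\log{n}}}$, handled by brute-force search for an $O(\log n)$ generating set and base plus \cite[Theorem~3.3]{normOfPrim}. To salvage your approach you would need to prove exactly the missing lemma: a quasipolynomial bound on, and efficient enumeration of, the candidate socles; that is the whole difficulty, not a loose end.
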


\noindent (Throughout the paper we shall assume all generating sets have size at most $n$: see \Cref{poly time lib}.\ref{replace by small gen set}).

In fact, we can compute $N_{K}(H)$ in time $2^{O(\log^3{n})}$ except when: (i) $H$ is intransitive; or (ii) $|H|\geq n^{1+\floor{\log{n}}}$ but $H$ is not ample (see \Cref{def: ample group}); or (iii) $|H|<n^{1+\floor{\log{n}}}$ but $H$ does not have a small base or a small generating set (see \Cref{get small gen set and base for $H$}). 
In this latter case, we can still compute $N_{K}(H)$ in quasipolynomial time $2^{O(\log^5{n})}$, see \Cref{small group's norm in quasipoly time}.

Babai in \cite{babaiGI} gave a $2^{O(\log^c{n})}$ time solution to the string isomorphism problem, and Helfgott in \cite{helfgottGI} showed that we can take $c=3$. 
The setwise stabiliser problem is a special case of the string isomorphism problem, and was shown in \cite{luksHierarchy} to be polynomial-time equivalent to the intersection problem. 
Hence to show that $N_{K}(H) = N \cap K$ can be computed in time $2^{O(\log^3{n})}$, it suffices to prove that $N$ can be computed in time $2^{O(\log^3{n})}$.

In \Cref{section: prelim}, we first present some preliminaries on permutation groups and permutation group algorithms. We then see how we can determine that certain groups $H$ have base and generating set of size $O(\log{n})$ in quasipolynomial time and prove \Cref{small group's norm in quasipoly time}. 
In \Cref{section: ample}, we introduce the class of ample groups and show that if $H$ is ample then $N = N_{\Sy_n}(H)$ can be computed in quasipolynomial time. 
Finally these results come together to prove \Cref{main theorem}.

%%%%%%%%%%%%%%%%%%%%%%%%%%%%%%%%%%%%%%%%%%%%%%%%%%%%%%%%%%%%%%%%%%%%%%%%%%%%%%%%%%%%%%%%%%%%%%

\section{Preliminaries and small groups}
\label{section: prelim} \label{section: small}

This section first collects background on permutation groups and polynomial time computation and then studies small groups.   

Let $G \leq \Sym(\Omega)$ and $H \leq \Sym(\Gamma)$. Then $G$ and $H$ are \emph{permutation isomorphic} if there exist an isomorphism $\phi: G \rightarrow H$ and a bijection $\sigma : \Omega \rightarrow \Gamma$ such that $\sigma(\omega^g)= \sigma(\omega)^{\phi(g)}$ for all $\omega \in \Omega$ and $g \in G$. 
We say that such a pair $(\phi,\sigma)$ is a \emph{permutation isomorphism} from $G$ to $H$. 

\begin{notation}
Let $\ksubsets{k}{m}$ denote the set of all $k$-subsets of $\{1,2, \ldots, m\}$ with $1 \leq k \leq m/2$. 
Let $\alt{m}{k}$ and $\sym{m}{k}$ denote $\Al_m$ and $\Sy_m$, acting on $\ksubsets{k}{m}$. 
Let $\ksubsets{k}{m}^l$ denote the set of all $l$-tuples of $\ksubsets{k}{m}$, and let $\Amkl$ be the group $(\alt{m}{k})^l$ acting coordinatewise on $\ksubsets{k}{m}^l$. 
\end{notation}

We will be using the following key result, proved by Mar\'{o}ti. 

\begin{theorem} [{\cite{marotiPrimOrders}}]\label{big small and sporadic primitive groups}
Let $G$ be a primitive subgroup of $\Sy_n$. 
Then at least one of the following holds. 
\begin{enumerate}
    \item $G$ is $\mathrm{M}_{11}$, $\mathrm{M}_{12}$, $\mathrm{M}_{23}$ or $\mathrm{M}_{24}$ with their 4-transitive actions. 
    \item There exist $m \geq 5$, $1 \leq k < m/2$ and $l\geq1$ such that, up to permutation isomorphism, $\Amkl \trianglelefteq G \leq \sym{m}{k} \wr \Sy_l$.
    \item $|G| < n^{1+ \floor{\log{n}}}$. 
\end{enumerate}
\end{theorem}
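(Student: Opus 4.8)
The statement is Mar\'{o}ti's bound on the order of a primitive group, and the natural route is the O'Nan--Scott theorem together with the classification of finite simple groups. The plan is to bound $|G|$ type by type and show that the only primitive groups exceeding the threshold $n^{1+\floor{\log n}}$ are the four Mathieu groups of case~(1) and the subset-action groups of case~(2), so that every other primitive group satisfies case~(3).

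First I would apply the O'Nan--Scott theorem to partition the primitive $G \le \Sy_n$ according to the structure of $M = \soc{G}$: the affine (elementary abelian socle) case, the almost simple case, the simple diagonal case, the product-action case, and the twisted wreath case. For all but the almost simple and product-action cases a crude order estimate suffices. In the affine case $n = p^d$ and $G \le \mathrm{AGL}(d,p)$, so $|G| \le p^d \lvert \mathrm{GL}(d,p)\rvert < p^{d}\, p^{d^2} = n \cdot n^{d}$; since $\floor{\log n} = \floor{d \log p} \ge d$ for $p \ge 2$, this is at most $n^{1+\floor{\log n}}$, giving case~(3). The simple diagonal and twisted wreath cases are bounded similarly: there $n$ is a power of $\lvert T\rvert$ for a nonabelian simple group $T$, while $|G|$ is bounded by $\lvert T\rvert^{k}\,\lvert\mathrm{Out}(T)\rvert\, k!$ up to the relevant action on the $k$ factors, and the comparison with $n^{1+\floor{\log n}}$ is a direct computation.

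The heart is the almost simple case, where $T \trianglelefteq G \le \Aut(T)$ with $T$ simple, and here I would split on the isomorphism type of $T$ using the classification. When $T = \Al_m$ and the action is on $k$-subsets, we land in case~(2) with $l = 1$ and $\alt{m}{k} \trianglelefteq G \le \sym{m}{k}$; when $T$ is one of the Mathieu groups in its $4$-transitive action we get case~(1). In every other configuration --- $T$ of Lie type, $T$ sporadic in a non-Mathieu or non-$4$-transitive action, or $T$ alternating acting primitively but \emph{not} on $k$-subsets --- one uses the known bounds on the orders of almost simple primitive groups (a theorem of Liebeck gives $|G| < n^{9}$ for all ``non-standard'' such groups) together with the minimal-degree data for the exceptional ``standard'' actions. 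Since $n^{9} \le n^{1+\floor{\log n}}$ as soon as $\floor{\log n} \ge 9$, i.e.\ $n \ge 512$, the bound~(3) follows for large $n$, and the finitely many small-degree groups are dispatched from the tables of primitive groups.

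Finally I would treat the product-action case by reducing to its primitive component: here $G \le W \wr \Sy_l$ acting on $\Delta^l$ with $W \le \Sym(\Delta)$ primitive of degree $m_0 = \lvert\Delta\rvert < n$ and $n = m_0^{\,l}$, so $|G| \le \lvert W\rvert^{l}\, l!$. If the component $W$ is of subset-action type, $\alt{m}{k} \trianglelefteq W \le \sym{m}{k}$, then $G$ is exactly of the form in case~(2); otherwise $W$ obeys the order bound already established, and one checks that $\lvert W\rvert^{l} l! \le n^{1+\floor{\log n}}$, again a direct estimate in $m_0$ and $l$. Collecting the cases, the primitive groups failing~(3) are precisely those of cases~(1) and~(2). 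The main obstacle is the almost simple case: it is genuinely dependent on the classification of finite simple groups, and the sharp threshold $n^{1+\floor{\log n}}$ --- rather than a cruder $n^{c\log n}$ --- forces careful bookkeeping of the Lie-type order bounds and an explicit finite check of the small-degree and borderline groups, including verifying that the Mathieu and subset-action families really are the only ones that exceed it.
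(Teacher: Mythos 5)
The paper does not prove this statement at all: it is Mar\'oti's theorem, imported by citation from \cite{marotiPrimOrders} (in a slightly weakened form --- Mar\'oti's case (3) gives the sharper bound $n\prod_{i=0}^{\floor{\log n}-1}(n-2^i)$), so there is no internal proof to compare against, only Mar\'oti's own argument. Measured against that, your outline has the right skeleton, and essentially the only possible one: O'Nan--Scott reduction, crude estimates for the affine, diagonal and twisted wreath types (your affine computation $|G| < n\cdot n^{d} \le n^{1+\floor{\log n}}$ is correct, and the affine case with $p=2$ is indeed the extremal configuration that makes the bound sharp), CFSG for the almost simple case, and folding product actions back onto their components. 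You also correctly identify the almost simple case as the crux.

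As a proof, however, that crux contains a genuine gap. The input you lean on --- ``a theorem of Liebeck gives $|G| < n^{9}$ for all non-standard almost simple primitive groups'' --- is not Liebeck's 1984 theorem. What that paper gives for non-standard groups is a minimal-degree bound of the form $m(G) \ge n/3$ and a base-size bound of the shape $b(G) \le 9\log{n}$, hence only $|G| \le n^{9\log{n}}$, which is useless against the threshold $n^{1+\floor{\log{n}}}$: the exponent is too large by a factor of about $9$. A constant-exponent bound does exist --- Cameron's base-size conjecture, proved by Liebeck and Shalev (1999) with an unspecified constant, and with the explicit value $b(G)\le 7$ (so $|G|\le n^{7}$) only in later work of Burness and others --- but Mar\'oti in 2002 did not and could not argue this way; his treatment of the Lie-type case consists of explicit family-by-family comparisons of $|\Aut(T)|$ against known lower bounds for degrees of faithful primitive actions, together with separate estimates for $\Al_m$ on partitions and classical groups on subspaces. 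So your pivotal step either miscites its input or silently imports a much heavier, later theorem; either way, the quantitative work that constitutes the actual content of the result is absent. A second, smaller gap: in the product-action case the component $W$ need not ``obey the order bound already established'', because $W$ can itself be a case-(1) exception; for instance $W=\mathrm{M}_{12}$ has $|W| = 95040 > 12^{4} = 12^{1+\floor{\log{12}}}$. The global bound still holds there (for $l \ge 2$ there is ample room, since $|W|$ is an absolute constant), but this sub-case requires its own check, which your argument as written passes over.
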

We shall call these classes \emph{Mathieu}, \emph{large} and \emph{small} respectively. 
A primitive group is of \emph{type PA} if it is in product action and the component of the base group is almost simple (see \cite{ONanScottForPrim}).  
It follows that a large primitive group is either almost simple (when $l=1$) or of type PA (when $l>1)$. 

For $G = \langle z_1, z_2, \ldots, z_k \rangle \leq \Sy_n$ and $L = \langle y_1, y_2, \ldots, y_k \rangle \leq \Sy_m$, a homomorphism $\phi:G\rightarrow L$ is \emph{given by generator images} if it is encoded by a list $[z_1, \ldots, z_k, y_1, \ldots, y_k, \phi(z_1), \ldots, \phi(z_k)]$.
We shall assume that all homomorphisms are given by generator images, that we have a library of standard representations of all finite simple groups, and that their automorphism groups are known. 

The following results are standard (see, for example, \cite{handbookCGT}). 

\begin{lemma} \label{poly time lib}
Given $G = \langle Z \rangle \leq \Sy_n$, the following can be done in time polynomial in $|Z| \cdot n$. 
\begin{enumerate}
    \item \label{replace by small gen set}  \label{membership testing} \label{compute order} \label{compute orbits} \label{compute pt stab}  \label{get a non-redundant base} \label{check if primitive} Replace $Z$ by a generating set for $G$ of size at most $n$; 
    given $\sigma \in \Sy_n$, decide if $\sigma \in G$; 
    compute $|G|$; compute the orbits of $G$; compute the stabiliser in $G$ of any given point; compute an irredundant base for $G$; decide if $G$ is primitive.  
    \item \label{check if map is homom} \label{compute im and preim of homom} 
    Given a map $\phi: G \rightarrow L$ by the images of $Z$, decide if $\phi$ extends to an isomorphism; 
    given an isomorphism $\phi:G \rightarrow L$, compute $\phi^{-1}$.  
    \item \label{find a minimal normal subgroup} \label{compute cent of normal subgroup} \label{compute socle} 
    Find a minimal normal subgroup of $G$; 
    compute $C_G(J)$ for $J \trianglelefteq G$; 
    find generators for the socle $\soc{G}$.
    \item \label{compute compo series} \label{decide if simple} Compute the composition factors of $G$; decide if $G$ is simple and if so, give an isomorphism from $G$ to a standard representation.  
\end{enumerate}
\end{lemma}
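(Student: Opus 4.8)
The plan is to treat the lemma as what it is — a compendium of standard permutation-group algorithms — and for each item to point to the appropriate routine, verifying that each runs in time polynomial in $|Z|\cdot n$ and flagging the places where a genuinely deep result is required. The workhorse for part 1 is the Schreier--Sims algorithm, which from $\langle Z\rangle$ computes a base and strong generating set (BSGS) together with the associated stabiliser chain in polynomial time. From a BSGS one reads off $|G|$ as the product of the fundamental orbit lengths; membership of a given $\sigma\in\Sy_n$ is decided by sifting $\sigma$ through the chain; the stabiliser $G_\alpha$ of a point is obtained by rerunning Schreier--Sims with $\alpha$ placed first in the base and returning the next group in the chain; and an irredundant base is produced as a by-product of the construction (deleting redundant base points if necessary). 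Orbits are computed directly by the orbit algorithm on the Schreier graph. To replace $Z$ by a generating set of size at most $n$ I would apply Jerrum's filter, which maintains a generating set of size at most $n-1$ throughout.

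For the primitivity test I would first check transitivity from the orbits, and then search for a nontrivial block system: using Atkinson's minimal-block algorithm one computes, for each of the $n-1$ pairs $\{\alpha,\beta\}$, the smallest block system containing that pair by a union--find procedure, and $G$ is primitive exactly when it is transitive and every such system is trivial. For part 2 the key device is the graph (diagonal) subgroup $D=\langle\,(z,\phi(z)) : z\in Z\,\rangle \le \Sy_n\times\Sy_m$: the map $\phi$ extends to a homomorphism iff the first projection $D\to G$ is injective, i.e.\ iff $|D|=|G|$, and it extends to an isomorphism iff moreover $|D|=|L|$, all three orders being computed by Schreier--Sims. Given that $\phi$ is an isomorphism, $\phi^{-1}(l)$ is found by sifting $l$ through a BSGS of $D$ adapted to the second coordinate and reading off the first coordinate of the resulting element.

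Parts 3 and 4 are where the genuinely hard results enter, and I expect these to be the main obstacle: they cannot be obtained by Schreier--Sims bookkeeping alone and ultimately rest on the structure theory of primitive groups and on the classification of finite simple groups. A minimal normal subgroup, the centraliser $C_G(J)$ of a normal subgroup $J$, and generators for $\soc{G}$ are all supplied by the polynomial-time socle machinery (the quotient-group methods of Kantor and Luks together with the socle and structure computations of Beals--Seress and Cannon--Holt); here I would flag the subtlety that, although computing the centraliser of an \emph{arbitrary} subgroup is not known to be polynomial, the normality of $J$ makes $C_G(J)$ accessible through the action of $G$ on its minimal normal subgroups and the easily computed centraliser in $\Sy_n$. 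A composition series, and hence the list of composition factors, is then built recursively by repeatedly passing to such normal subgroups and their quotients; $G$ is simple precisely when this series has length one, and in that case constructive recognition of simple groups identifies the isomorphism type and returns an explicit isomorphism to the stored standard copy. Since the only nonelementary ingredients are those of parts 3 and 4, the real work is assembling the correct deep references rather than producing a new argument, so I would present the whole lemma as standard, citing \cite{handbookCGT} for the omnibus treatment and the original sources for the socle, composition-series and constructive-recognition components.
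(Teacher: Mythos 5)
The paper gives no proof of this lemma at all: it simply declares the results standard and cites \cite{handbookCGT}. Your proposal adopts the same stance but actually fleshes out the algorithms, and for parts 1, 3 and 4 your account is sound: Schreier--Sims plus Jerrum's filter and Atkinson's block algorithm cover part 1, and you correctly identify parts 3 and 4 as the places where deep machinery (Kantor--Luks quotient and socle methods, CFSG-based composition series and constructive recognition) must be cited rather than derived. That matches both the content and the spirit of the paper's treatment.

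However, there is one concrete error, in your part 2. The criterion ``$\phi$ extends to an isomorphism iff $|D|=|G|$ and moreover $|D|=|L|$'' is not sufficient. Take $G=\langle (1,2),(3,4)\rangle$ and $L=\langle (1,2)(3,4),(5,6)\rangle$, both isomorphic to $C_2\times C_2$, and let $\phi$ send both generators of $G$ to $(1,2)(3,4)$. Then $D=\langle ((1,2),(1,2)(3,4)),\,((3,4),(1,2)(3,4))\rangle$ has order $4=|G|=|L|$, and $\phi$ does extend to a homomorphism, but that homomorphism has kernel and image both of order $2$, so it is not an isomorphism: your test returns a false positive. The point is that $|D|=|L|$ by itself guarantees neither injectivity nor surjectivity. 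You must additionally check that the second projection is injective on $D$, i.e.\ that $D\cap (\Sym(\{1,\dots,n\})\times\{1\})=1$, or equivalently that $|\langle \phi(Z)\rangle|=|D|$; combined with $|D|=|G|$ (homomorphism) and $|D|=|L|$ this does characterise extension to an isomorphism. The repair costs only one more Schreier--Sims computation, so the proposal is easily fixed, but as written this step fails.
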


%%%%%%%%%%%%%%%%%%%%%%%%%%%%%%%%%%%%%%%%%%%%%%%%%%%%%%%%%%%%%%%%%%%%%%%%%%%%%%%%%%%%%%%%%%%%%%

Next, we show that we can find a small base and a small generating set for certain groups $H$ in quasipolynomial time. 
For a group $G \leq \Sy_n$, let $d(G)$ and $b(G)$ denote the size of the smallest generating set and base of $G$, respectively.

\begin{lemma} \label{get small gen set and base for $H$}
Let $H = \langle X \rangle \leq \Sy_n$ be given. 
\begin{enumerate}
    \item \label{small gen set in quasipoly time} If $|H| \leq n^{1+ \floor{\log{n}}}$, then in time $2^{O(\log^3{n})}$, we can decide if $d(H) \leq \ceil{\log{n}}$, and if so output such a generating set.
    \item \label{small base in quasipoly time} In time $2^{O(\log^2{n})}$, we can decide if $b(H) \leq \ceil{\log{n}} + 1$, and if so output such a base. 
    \item \label{if norm small prim then gent small gen set and small base in quasipoly time} If $N=N_{\Sy_n}(H)$ is a small primitive group, then $d(H) \leq \ceil{\log{n}}$ and $b(H) \leq \ceil{\log{n}} + 1$.
\end{enumerate}
\end{lemma}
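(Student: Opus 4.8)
The plan is to handle parts \ref{small gen set in quasipoly time} and \ref{small base in quasipoly time} by exhaustive search over a space that the hypotheses keep quasipolynomially small, and part \ref{if norm small prim then gent small gen set and small base in quasipoly time} by a structural argument. For part \ref{small gen set in quasipoly time}, the point is that $|H|\le n^{1+\floor{\log{n}}}=2^{O(\log^2 n)}$; using \Cref{poly time lib} I compute $|H|$ and a stabiliser chain and enumerate all of $H$ in time $|H|\cdot\mathrm{poly}(n)=2^{O(\log^2 n)}$. Writing $k=\ceil{\log{n}}$, I then run over the at most $|H|^k=2^{O(\log^3 n)}$ subsets of size $k$, computing with \Cref{poly time lib} the order of each generated subgroup; $d(H)\le k$ holds iff one of these subgroups has order $|H|$, and I output such a subset. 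For part \ref{small base in quasipoly time} the search is over points rather than elements, so no order hypothesis is needed: a base of size at most $k':=\ceil{\log{n}}+1$ is a set $S$ of at most $k'$ points with $H_{(S)}=1$, and there are at most $n^{k'}=2^{O(\log^2 n)}$ such sets. Computing each $H_{(S)}$ by iterating the point-stabiliser routine of \Cref{poly time lib} at most $k'$ times is polynomial (the group only shrinks), so this runs in time $2^{O(\log^2 n)}$.

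For part \ref{if norm small prim then gent small gen set and small base in quasipoly time} I would argue structurally. Since $H\le N_{\Sy_n}(H)=N$ and $N$ is primitive, a nontrivial $H$ is transitive, and the bounds are vacuous if $H=1$. As $H\le N$, any base of $N$ is a base of $H$, so $b(H)\le b(N)$ and it suffices to bound $b(N)$ and $d(H)$. The naive estimates $b(N)\le\log_2|N|$ and $d(H)\le\log_2|H|$ only give $O(\log^2 n)$, so I must use the structure of small primitive groups, splitting into cases by O'Nan--Scott type (equivalently, by how the small alternative of \Cref{big small and sporadic primitive groups} is realised). The extremal case is the affine one, $N\le\mathrm{AGL}_d(p)$ with $n=p^d$: here $d+1$ points in general position form a base, giving $b(N)\le d+1=\log_p n+1\le\ceil{\log{n}}+1$, while the regular translation subgroup $C_p^d$ attains $d(H)=d=\log_p n\le\ceil{\log{n}}$ (both bounds are sharp when $p=2$). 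In the almost simple, product-action and diagonal small cases the degree $n$ is large relative to $|N|$, so bases and generating sets are small and the bounds hold comfortably.

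The main obstacle is pinning down the exact constants $\ceil{\log{n}}+1$ and $\ceil{\log{n}}$ uniformly across all types. For the base this amounts to verifying, in every non-affine small case, that a group in Mar\'{o}ti's small class of degree $n$ has a base of size at most $\ceil{\log{n}}+1$. For the generating number I would bound $d(H)$ through the chief factors of $H$: an abelian chief factor is a section of the transitive group $H$ and so has rank at most $\log_2 n$ (again sharp for $C_2^d\le\mathrm{AGL}_d(2)$), whereas each nonabelian chief factor needs only boundedly many generators and such factors are few. Assembling these contributions into the single clean bound $d(H)\le\ceil{\log{n}}$, rather than an $O(\log n)$ bound with a worse constant, is the delicate step where I expect the real effort to lie.
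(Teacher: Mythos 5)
Your parts \ref{small gen set in quasipoly time} and \ref{small base in quasipoly time} are correct and essentially identical to the paper's proof: brute-force search over $\ceil{\log{n}}$-tuples of elements of $H$ (at most $|H|^{\ceil{\log{n}}} \in 2^{O(\log^3{n})}$ of them, by the order hypothesis) and over $(\ceil{\log{n}}+1)$-tuples of points (at most $n^{\ceil{\log{n}}+1} \in 2^{O(\log^2{n})}$), testing each candidate in polynomial time via \Cref{poly time lib}. The only cosmetic difference is that you enumerate $H$ explicitly and search over subsets rather than tuples, which changes nothing.

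Part \ref{if norm small prim then gent small gen set and small base in quasipoly time} is where your proposal has a genuine gap, and you have in effect diagnosed it yourself: the ``delicate step'' you defer at the end is the entire content of the statement, and it cannot be assembled from the elementary ingredients you list. Both bounds are deep, CFSG-based theorems which the paper simply cites. For the base size, the assertion that every primitive group of degree $n$ outside Mar\'oti's large class satisfies $b(N)\le\ceil{\log{n}}+1$ is the main theorem of Moscatiello and Roney-Dougal \cite{mariapiaColva}; your affine computation handles one extremal family, but the non-affine cases are not ``comfortable'' --- the constant is sharp and delicate there (indeed $\mathrm{M}_{24}$, which escapes the hypothesis of part \ref{if norm small prim then gent small gen set and small base in quasipoly time} only because it is not small, is a genuine exception with $b=7>\ceil{\log{24}}+1$), and verifying the exact constant across the almost simple, diagonal and product-type cases is precisely the hard case analysis of that paper. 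For the generating number, your chief-factor sketch cannot yield the stated constant: summing a $\log{n}$ bound for abelian chief factors and bounded contributions for nonabelian ones gives $O(\log{n})$ with a worse constant, and sharpening this to $d(H)\le\log{n}$ for an arbitrary normal subgroup $H$ of a primitive group is exactly \cite[Theorem~1.1]{minGenOfPermAndtMatGps} (which also records the single exception $H=\Sy_3$, still satisfying $d(H)\le\ceil{\log{n}}$). The paper's proof of this part is three lines: $H\trianglelefteq N$ with $N$ primitive and not large, so \cite[Theorem~1.1]{minGenOfPermAndtMatGps} gives $d(H)\le\ceil{\log{n}}$, and \cite{mariapiaColva} gives $b(H)\le b(N)\le\ceil{\log{n}}+1$. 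So the missing idea is not a trick but the right pair of citations; without them (or a reproof of each, which would be a research paper in itself) your part \ref{if norm small prim then gent small gen set and small base in quasipoly time} remains an outline rather than a proof.
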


\begin{proof}
\Cref{small gen set in quasipoly time}: We consider all $(\ceil{\log{n}})$-tuples $Z$ of elements of $H$ and decide for each $Z$ if $\langle Z \rangle = H$. 
The number of such tuples is $|H|^{\ceil{\log{n}}} \in 2^{O(\log^3{n})}$. 
By \Cref{poly time lib}.\ref{membership testing}, for each such tuple $Z$, we can decide if $\langle Z \rangle = H$ in polynomial time. \\
\Cref{small base in quasipoly time}: We consider all $(\ceil{\log{n}} + 1)$-tuples $B$ over $\{1,2, \ldots, n\}$. 
For each such $B$, we check if $B$ is a base of $H$ by checking if $H_{(B)}=1$, which can be done in polynomial time by \Cref{poly time lib}.\ref{compute pt stab}. 
Since there are $n^{\ceil{\log{n}} + 1} \in 2^{O(\log^2{n})}$ tuples to consider, the result follows. \\
\Cref{if norm small prim then gent small gen set and small base in quasipoly time}:  If $N$ is a small primitive group, then $H$ has order at most $n^{1+ \floor{\log{n}}}$. 
Since $H$ is a normal subgroup of a primitive group, by \cite[Theorem~1.1]{minGenOfPermAndtMatGps}, $d(H) \leq \log{n}$ or $H=\Sy_3$, so $d(H) \leq \ceil{\log{n}}$. By \cite{mariapiaColva}, $b(H) \leq b(N) \leq \ceil{\log{n}} + 1$. 
\end{proof}

Lastly we observe that the normaliser problem for groups of order less than $n^{1+ \floor{\log{n}}}$ can be solved in quasipolynomial time, even if they are not primitive.

\begin{proposition}\label{small group's norm in quasipoly time}
Let $H = \langle X \rangle \leq \Sy_n$ be given. If $|H| < n^{1+ \floor{\log{n}}}$, then $N_{\Sy_n}(H)$, and hence $N_K(H)$, can be computed in time $2^{O(\log^5{n})}$. 
\end{proposition}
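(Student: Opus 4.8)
The plan is to exploit the order bound directly. Since $|H| < n^{1+\floor{\log{n}}}$ we have $|H| = 2^{O(\log^2 n)}$, and a greedy pass through $X$ (keeping only those generators that strictly increase the order of the subgroup generated so far, each of which therefore at least doubles it) produces in polynomial time a generating set $h_1,\dots,h_d$ of $H$ with $d \le \log_2|H| = O(\log^2 n)$, using \Cref{poly time lib}.\ref{compute order} to compute orders. I would then compute $N = N_{\Sy_n}(H)$ through the conjugation homomorphism $c\colon N \to \Aut(H)$, $\sigma \mapsto (h \mapsto \sigma^{-1}h\sigma)$, whose kernel is the centraliser $C := C_{\Sy_n}(H)$ and whose image is the group $A \le \Aut(H)$ of automorphisms of $H$ \emph{induced} by some element of $\Sy_n$. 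Because $c$ is surjective onto $A$ with kernel $C$, the group $N$ is generated by $C$ together with one permutation $\sigma_\alpha$ realising each element (or each generator) of $A$; so it suffices to compute $C$ and to determine $A$ with a realising permutation for each of its elements.

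For the centraliser, $C = C_{\Sy_n}(H)$ of an explicitly given permutation group is computable in polynomial time by standard methods: it respects the $H$-orbit decomposition and is determined by the isomorphism types of the induced transitive actions (see \cite{handbookCGT}). To find $A$, I would enumerate $\Aut(H)$ by brute force over generator images: every automorphism is determined by the tuple $(\alpha(h_1),\dots,\alpha(h_d)) \in H^d$, and for each of the at most $|H|^d \le 2^{(\log_2|H|)^2} = 2^{O(\log^4 n)}$ candidate tuples we test in polynomial time, via \Cref{poly time lib}.\ref{check if map is homom}, whether it extends to an automorphism of $H$. An automorphism $\alpha$ lies in $A$ precisely when there is a $\sigma$ with $h_i^\sigma = \alpha(h_i)$ for all $i$; equivalently, when the given permutation representation of $H$ is equivalent to its relabelling $h \mapsto \alpha^{-1}(h)$, so that the set of realising $\sigma$ is either empty or a coset of $C$. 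A representative (or a proof that none exists) can be found in polynomial time by matching the orbits and point stabilisers of the two representations. Collecting these representatives and adjoining generators of $C$ yields a generating set for $N$, which we reduce to size at most $n$ by \Cref{poly time lib}.\ref{replace by small gen set}. Having $N$, I would obtain $N_K(H) = N \cap K$ through the intersection problem, which is polynomial-time equivalent to string isomorphism and hence solvable in time $2^{O(\log^3 n)}$ by Helfgott's bound.

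The dominant cost is the enumeration of candidate automorphisms, namely $2^{O(\log^4 n)}$ tuples each processed in polynomial time, so $N$ (and then $N_K(H)$) is computed in quasipolynomial time, comfortably within the stated $2^{O(\log^5 n)}$; every other step is polynomial. The bound $|\Aut(H)| \le 2^{O(\log^4 n)}$, forced by the $O(\log^2 n)$-size generating set, is exactly the feature that keeps the whole computation quasipolynomial, so the order hypothesis is used twice: to bound $d(H)$ and to bound the search space. I expect the main obstacle to be the careful justification of the realisability test in step two: one must argue that deciding whether an abstract automorphism of $H$ is induced by a permutation, and extracting a witness, is genuinely polynomial. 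This is where having $H$ as an explicit permutation group (rather than an abstract presentation) is essential, since it reduces realisability to comparing two concrete permutation representations that differ only by the relabelling $\alpha$, a task handled by the standard orbit-and-stabiliser machinery.
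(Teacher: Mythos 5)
Your proposal is correct, but it takes a genuinely different route from the paper's proof, and it is worth comparing the two. The paper computes both a small irredundant generating set $Z$ and a small irredundant base $B$ for $H$ (each of size at most $\log|H| \in O(\log^2 n)$), and then invokes the machinery of \cite[proof of Theorem~3.3]{normOfPrim} to enumerate, in time $2^{O(|Z||B|\log n)} = 2^{O(\log^5 n)}$, a set containing all conjugation-images of $Z$ under $N_{\Sy_n}(H)$; each candidate is then tested, via \Cref{poly time lib}.\ref{check if map is homom} and \cite[Lemma~3.5]{luks2011polynomial}, for a realising permutation. You instead dispense with the base entirely and enumerate all $d$-tuples in $H^d$ directly, exploiting the fact that $|H|$ itself is quasipolynomially bounded, which the paper's base-indexed enumeration does not use in this way; this gives the sharper count $|H|^d \leq 2^{O(\log^4 n)}$ and makes the argument self-contained rather than resting on the enumeration technique of \cite{normOfPrim}. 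You also make explicit the kernel--image structure of the conjugation map $c\colon N \to \Aut(H)$, i.e.\ that one must adjoin generators of $C_{\Sy_n}(H)$ (computable in polynomial time by standard methods, as you say) to the coset representatives $\sigma_\alpha$ -- a point the paper's proof leaves implicit inside the cited machinery. The one step you flag as a potential obstacle, deciding in polynomial time whether an abstract automorphism of $H$ is induced by some $\sigma \in \Sy_n$ and extracting a witness, is not actually a gap: it is exactly \cite[Lemma~3.5]{luks2011polynomial}, the same result the paper cites, and your orbit-matching sketch (match orbits whose transitive constituents are equivalent, locate $\sigma(\omega)$ among the fixed points of the image of the point stabiliser, and patch orbit-wise intertwiners together) is essentially the standard proof of it. Net effect: your argument proves the proposition with the better bound $2^{O(\log^4 n)}$, at the cost of enumerating the elements of $H$ explicitly, which the order hypothesis makes harmless.
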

\begin{proof}
By \Cref{poly time lib}.\ref{get a non-redundant base}, in polynomial time, we can check that $|H| < n^{1+ \floor{\log{n}}}$, compute an irredundant base $B$ for $H$ and remove from $X = \{x_1, x_2, \ldots, x_s \}$ the generators $x_i$ where $x_i \in \langle x_1, \ldots, x_{i-1} \rangle$. 
This gives a base $B$ and a generating set $Z$ for $H$ of size at most $\log{|H|} \in O(\log^2{n})$.

In \cite[proof of Theorem~3.3]{normOfPrim}, it is shown that in time $2^{O(|Z||B| \log{n})}$, we can construct a set containing all $|Z|$-tuples of elements of $H$ that are images of $Z$ under conjugation by elements of $N_{\Sy_n}(H)$. 
By \Cref{poly time lib}.\ref{check if map is homom} and \cite[Lemma~3.5]{luks2011polynomial}, for each such potential image, we can determine a conjugating element $\sigma \in N_{\Sy_n}(H)$ or show that no such $\sigma$ exists in polynomial time. 
\end{proof}

%%%%%%%%%%%%%%%%%%%%%%%%%%%%%%%%%%%%%%%%%%%%%%%%%%%%%%%%%%%%%%%%%%%%%%%%%%%%%%%%%%%%%%%%%%%%%%

\section{Ample groups}
\label{section: ample}

In this section, we will introduce ample groups, and show that if $N=N_{\Sy_n}(H)$ is a large primitive group, then $H$ is ample. We then show that in quasipolynomial time, we can decide if a given group is ample and if so compute its normaliser. 
Finally, we will prove \Cref{main theorem}.  

\begin{definition} \label{def: ample group}
We define a subgroup $H$ of $\Sy_n$ to be \emph{ample} if there exist $m \geq 5$, $1 \leq k < m/2$ and $l\geq1$ 
such that $\soc{H}$ is permutation isomorphic to $\Amkl$. 
\end{definition}

Notice that an ample group can be imprimitive.

\begin{lemma}\label{socles of H and G} \label{soc G equal soc H} \label{G large prim then H large}
Let $H$ be a subgroup of $\Sy_n$ such that $N=N_{\Sy_n}(H)$ is a large primitive group. Then  $\soc{N}=\soc{H}$, and $H$ is ample. 
\end{lemma}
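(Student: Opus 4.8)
The plan is to exploit the O'Nan--Scott structure of large primitive groups together with the elementary fact that a nontrivial normal subgroup of a primitive group with nonabelian socle contains that socle. First I would record that $H \trianglelefteq N$, since $N = N_{\Sy_n}(H)$. Applying \Cref{big small and sporadic primitive groups} to the large primitive group $N$ yields $m \geq 5$, $1 \leq k < m/2$ and $l \geq 1$ with, up to permutation isomorphism, $\Amkl \trianglelefteq N \leq \sym{m}{k} \wr \Sy_l$; in particular $M := \soc{N}$ is permutation isomorphic to $\Amkl$. As noted after \Cref{big small and sporadic primitive groups}, $N$ is almost simple when $l = 1$ and of type PA when $l > 1$, so in either case $M \cong (\Al_m)^l$ is a \emph{nonabelian} minimal normal subgroup. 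I would then establish the two structural facts that drive the argument: that $M$ is the \emph{unique} minimal normal subgroup of $N$, and that $C_N(M) = 1$. The almost simple case is standard; for type PA both follow once one checks that $C_{\Sy_m}(\Al_m) = 1$ for $m \geq 5$ (as $\Al_m$ is nonabelian simple and self-centralising in $\Sy_m$), so the socle of the base group, and hence $M$, is self-centralising in $\sym{m}{k} \wr \Sy_l$; uniqueness then follows because any second minimal normal subgroup would centralise $M$.

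Next I would show $\soc{N} \leq \soc{H}$. We may assume $H \neq 1$: when $H = 1$ we have $|H| = 1 < n^{1 + \floor{\log n}}$, so $N_K(H)$ is covered by the small-group case of \Cref{small group's norm in quasipoly time}. Since $1 \neq H \trianglelefteq N$ and $M$ is the unique minimal normal subgroup of $N$, a minimal nontrivial normal subgroup of $N$ contained in $H$ must equal $M$, so $M \leq H$. In particular $\soc{H} \neq 1$; and $\soc{H}$ is characteristic in $H \trianglelefteq N$, hence $\soc{H} \trianglelefteq N$. Applying uniqueness of $M$ once more gives $M \leq \soc{H}$.

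For the reverse inclusion I would use self-centralisation. Since $H \leq N$ we have $C_H(M) \leq C_N(M) = 1$. Let $L$ be any minimal normal subgroup of $H$. Then $L \cap M \trianglelefteq H$ and $L \cap M \leq L$, so minimality forces $L \cap M \in \{1, L\}$. If $L \cap M = 1$ then $[L, M] \leq L \cap M = 1$ (both being normal in $H$), whence $1 \neq L \leq C_H(M) = 1$, a contradiction; so $L \leq M$. Thus every minimal normal subgroup of $H$ lies in $M$, giving $\soc{H} \leq M$. Combining the two inclusions yields $\soc{H} = M = \soc{N}$, and since $M$ is permutation isomorphic to $\Amkl$, so is $\soc{H}$; hence $H$ is ample.

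I expect the main obstacle to be the first paragraph, namely correctly extracting from the description of large primitive groups that the socle is the \emph{unique}, \emph{self-centralising} minimal normal subgroup, with the product-action case requiring the computation $C_{\Sy_m}(\Al_m) = 1$ and the observation that $N$ permutes the $l$ simple factors transitively. Once those structural facts are secured, the socle-equality argument is routine finite group theory, and the permutation-isomorphism conclusion is immediate from \Cref{big small and sporadic primitive groups}.
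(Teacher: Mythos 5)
Your argument is correct, and its skeleton is exactly the paper's: one inclusion comes from $\soc{H}$ being characteristic in $H$, hence normal in $N$, together with uniqueness of the minimal normal subgroup of $N$; the other comes from the fact that a minimal normal subgroup of $H$ meeting $\soc{N}$ trivially would centralise it, contradicting $C_N(\soc{N})=1$; ampleness is then immediate. Where you genuinely differ is in how the two structural facts are sourced: the paper simply cites \cite{ONanScottForPrim} for uniqueness (a large primitive group being almost simple or of type PA) and \cite[Theorem~4.3B]{dixonMortimer} for $C_N(\soc{N})=1$, whereas you re-derive both from the embedding $\Amkl \trianglelefteq N \leq \sym{m}{k}\wr\Sy_l$ supplied by \Cref{big small and sporadic primitive groups}. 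This buys self-containedness, but note that the load-bearing step in your derivation is precisely the one you defer: that $N$ permutes the $l$ simple factors of $\Amkl$ transitively. Without it, $\Amkl$ need not be a \emph{minimal} normal subgroup of $N$, so your identification $M=\soc{N}=\Amkl$ (and hence the uniqueness argument) is not yet justified. The claim does follow in one line from primitivity: every nontrivial normal subgroup of a primitive group is transitive, while the sub-product of $\Amkl$ over a proper nonempty $N$-orbit of factors has all its orbits inside the proper subsets of $\ksubsets{k}{m}^l$ obtained by fixing the remaining coordinates. Two smaller remarks: your reduction of $C_{\sym{m}{k}}(\alt{m}{k})=1$ to $C_{\Sy_m}(\Al_m)=1$ is valid only because $\Sy_m$ acts faithfully on $\ksubsets{k}{m}$, which is worth saying explicitly; and your exclusion of $H=1$ is in fact necessary (for $H=1$ and $n\geq 5$ one has $N=\Sy_n$ large primitive but $\soc{H}=1$, so the lemma, like the paper's proof of it, implicitly assumes $H$ nontrivial), though the clean justification is that the proof of \Cref{main theorem} only invokes the lemma for nontrivial $H$, rather than the algorithmic appeal to \Cref{small group's norm in quasipoly time}.
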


\begin{proof} 
We first show that $\soc{N}=\soc{H}$.  
The group $\soc{H}$ is characteristic in $H$, so $\soc{H} \trianglelefteq N$.
A large primitive group is either almost simple or of type PA, and so $N$ has a unique minimal normal subgroup (see \cite{ONanScottForPrim}). 
Therefore \[ 
\soc{N} \leq \soc{H} \leq H, \text{ so } \soc{N} \trianglelefteq H. 
\]
To see that $\soc{H} \leq \soc{N}$, let $M$ be a minimal normal subgroup of $H$. 
Then either $M \leq \soc{N}$ or $M \cap \soc{N}=1$. If $M \cap \soc{N}=1$, then $M \leq C_N(\soc{N})$. But by \cite[Theorem~4.3B]{dixonMortimer}, $C_N(\soc{N})=1$, a contradiction. Therefore, all minimal normal subgroups of $H$ are contained in $\soc{N}$, hence $\soc{H} \leq \soc{N}$, and so $\soc{N} = \soc{H}$.  

The largeness of $N$ implies that 
there exist $m,k$ and $l$ such that $\soc{N}$ is permutation isomorphic to $\Amkl$. Now since $\soc{H} = \soc{N}$, the group $H$ is ample. 
\end{proof}

The following is well known (see \cite[Theorem~4.5A]{dixonMortimer}, for example).  

\begin{lemma} \label{norm of Amkl is wreath}
Let $W \leq \Sym(\ksubsets{k}{m}^l)$ be $\sym{m}{k} \wr \Sy_l$ acting on $\ksubsets{k}{m}^l$,  for some $m \geq 5$, $1 \leq k < m/2$ and $l\geq1$.  
Then the normaliser in $\Sym(\ksubsets{k}{m}^l)$ of $\Amkl$ is $W$.
\end{lemma}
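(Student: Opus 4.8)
The plan is to prove the statement in two stages: first the base case $l = 1$, asserting that $N_{\Sym(\ksubsets{k}{m})}(\alt{m}{k}) = \sym{m}{k}$, and then a reduction showing that for $l \geq 2$ the normaliser of the direct power is the full wreath product. Throughout write $\Delta = \ksubsets{k}{m}$, $T = \alt{m}{k}$ and $\Omega = \ksubsets{k}{m}^l$, so that $\Amkl = T^l$ in product action on $\Omega$. I first record that $T$ is transitive on $\Delta$ of degree $\binom{m}{k}$, since $(\Sy_k \times \Sy_{m-k}) \cap \Al_m$ has index $\binom{m}{k}$ in $\Al_m$; and since $k < m/2$ this point stabiliser is self-normalising in $\Al_m$, so by the standard description of the centraliser of a transitive group (namely $C_{\Sym(\Delta)}(T) \cong N_T(T_\omega)/T_\omega$) we get $C_{\Sym(\Delta)}(T) = 1$.

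For the base case, triviality of the centraliser means conjugation embeds $N_{\Sym(\Delta)}(T)$ into $\Aut(T) \cong \Aut(\Al_m)$, with image fixing the conjugacy class of the point stabiliser $(\Sy_k \times \Sy_{m-k}) \cap \Al_m$ and realisable by a genuine permutation of $\Delta$. For $m \neq 6$ we have $\Aut(\Al_m) = \Sy_m = \sym{m}{k}$, and $\sym{m}{k}$ plainly normalises $T$ since $\Al_m \trianglelefteq \Sy_m$, so equality holds. For $m = 6$ (hence $k \in \{1,2\}$) one checks that the exceptional outer automorphism of $\Al_6$ fails to preserve the relevant stabiliser class, as it interchanges the two classes of $\Al_5$-type, respectively $\Sy_4$-type, subgroups, so it is induced by no permutation of $\Delta$, and again $N_{\Sym(\Delta)}(T) = \sym{m}{k}$. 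This exceptional case is the one genuinely delicate point; everything else is routine.

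For $l \geq 2$, since $\Al_m$ is nonabelian simple the minimal normal subgroups of $B := T^l$ are exactly the $l$ direct factors $T_1, \ldots, T_l$. Any $\sigma \in N_{\Sym(\Omega)}(B)$ induces by conjugation an automorphism of $B$, which permutes $\{T_1, \ldots, T_l\}$; this yields a homomorphism $\pi \colon N_{\Sym(\Omega)}(B) \to \Sy_l$. The coordinate-permuting top group $\Sy_l \leq W$ normalises $B$ and surjects onto $\Sy_l$ under $\pi$, so $\pi$ is onto. To compute $\ker \pi$, observe that the orbits of $T_i$ partition $\Omega$ into the fibres $\mathcal{P}_i$ of the $i$-th coordinate projection, transitivity of $T$ on $\Delta$ ensuring that each such orbit ranges over all of $\Delta$ in coordinate $i$. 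An element of $\ker \pi$ normalises each $T_i$ and hence preserves each $\mathcal{P}_i$; taking the join $\bigvee_{j \neq i} \mathcal{P}_j$ recovers the partition ``equal $i$-th coordinate'', so $\sigma$ must act coordinatewise as some $(\rho_1, \ldots, \rho_l) \in \Sym(\Delta)^l$. Since $\sigma$ normalises $T_i$, each $\rho_i$ lies in $N_{\Sym(\Delta)}(T) = \sym{m}{k}$ by the base case, whence $\ker \pi = \sym{m}{k}^l$ is the base group of $W$.

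Combining the two computations, $N_{\Sym(\Omega)}(B)$ is an extension of $\sym{m}{k}^l$ by $\Sy_l$. As $W = \sym{m}{k} \wr \Sy_l$ is contained in $N_{\Sym(\Omega)}(B)$ and has precisely this order, the two groups coincide, proving the lemma. The main conceptual ingredient is the recovery of the product decomposition from the orbit partitions of the simple direct factors, while the main technical obstacle is the $m = 6$ subtlety in the base case.
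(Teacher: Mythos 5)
Your proof is correct, but note that the paper does not actually prove this lemma at all: it is stated as well known and dispatched with a citation to \cite[Theorem~4.5A]{dixonMortimer}. So your submission is a genuine self-contained argument where the paper relies on the literature. Your two stages are the natural ones: (i) the case $l=1$, where triviality of $C_{\Sym(\ksubsets{k}{m})}(\alt{m}{k})$ (via the self-normalising point stabiliser and $C_{\Sym(\Delta)}(T)\cong N_T(T_\omega)/T_\omega$) embeds the normaliser into $\Aut(\Al_m)$, and the image is pinned down to $\Sy_m$ because any element of the normaliser must preserve the conjugacy class of point stabilisers --- which for $m=6$ the exceptional outer automorphisms do not, since they fuse the two classes of $\Al_5$-subgroups (for $k=1$), respectively of $\Sy_4$-subgroups (for $k=2$); (ii) for $l\geq 2$, recovering the coordinatewise structure of $\ker\pi$ from the orbit partitions of the simple direct factors, giving $\ker\pi=\sym{m}{k}^l$, whence $|N_{\Sym(\ksubsets{k}{m}^l)}(\Amkl)|=|W|$ and equality follows from $W\leq N_{\Sym(\ksubsets{k}{m}^l)}(\Amkl)$. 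This is essentially the argument one would reconstruct from Dixon--Mortimer, so what your approach ``buys'' is independence from the reference, at the cost of leaning on two standard but unproved facts (see below).

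Two minor points. First, a wording slip: the orbits of $T_i$ are the fibres of the projection \emph{forgetting} the $i$-th coordinate (onto the coordinates $j\neq i$), not ``the fibres of the $i$-th coordinate projection''; your follow-up description (``ranges over all of $\Delta$ in coordinate $i$'') and the join computation $\bigvee_{j\neq i}\mathcal{P}_j = \{\text{equal $i$-th coordinate}\}$ use the correct objects, so the argument itself is unharmed, but the sentence as written contradicts itself. Second, you assert rather than prove that the point stabiliser is self-normalising in $\Al_m$; this deserves a line, e.g.: the orbits of $T_\omega=(\Sym(\omega)\times\Sym(\omega^c))\cap\Al_m$ on $\{1,\ldots,m\}$ are $\omega$ and its complement, which have different sizes since $k<m/2$, so any element of $N_{\Al_m}(T_\omega)$ permutes these orbits and therefore fixes $\omega$ setwise, hence lies in $T_\omega$. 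The $m=6$ fusion facts you invoke are likewise asserted; they are true and standard, and you rightly flag them as the delicate point, but in a fully self-contained write-up they too would need either a computation or a precise citation.
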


Next we give a polynomial time algorithm to decide whether $H$ is ample.

\begin{lemma}\label{socle is perm isom to Akmr in poly time} \label{can decide if norm is large in poly time}
Given $H = \langle X \rangle \leq \Sy_n$,  
in polynomial time, 
we can decide if $H$ is ample,
and if so output a permutation isomorphism from $\soc{H}$ to $\Amkl$, for some $m,k$ and $l$. 
\end{lemma}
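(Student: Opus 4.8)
The plan is to reduce the question to the socle and then reconstruct the product action explicitly. First I would compute generators for $M=\soc{H}$ using \Cref{poly time lib}.\ref{compute socle}, and its composition factors using \Cref{poly time lib}.\ref{compute compo series}. Since the socle of any finite group is a direct product of simple groups, $M$ can be permutation isomorphic to $\Amkl=(\Al_m)^l$ (see \Cref{def: ample group}) only if every composition factor of $M$ is isomorphic to a single alternating group $\Al_m$ with $m\geq 5$; so if the factors are not all of this form (in particular if any is abelian), I would immediately report that $H$ is not ample. Otherwise $m$ is determined by the factors and $l$ is their number. The degree of $\Amkl$ is $\binom{m}{k}^l$, and $\binom{m}{k}$ is strictly increasing for $1\leq k<m/2$, so there is at most one $k$ with $\binom{m}{k}^l=n$; since $M\leq\Sy_n$ contains a copy of $\Al_m$ we have $m\leq n$, so I can find this $k$, or rule it out, by testing the at most $m/2$ candidates in polynomial time. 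If no such $k$ exists, $H$ is not ample.

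Next I would locate the $l$ simple direct factors $S_1,\dots,S_l$ of $M$. Because the minimal normal subgroups of $(\Al_m)^l$ are exactly its $l$ simple factors, I can peel them off one at a time: find a minimal normal subgroup $S_1$ of $M$ using \Cref{poly time lib}.\ref{find a minimal normal subgroup}, replace $M$ by $C_M(S_1)=\prod_{j\geq 2}S_j$ using \Cref{poly time lib}.\ref{compute cent of normal subgroup}, and repeat. Each $S_i$ is simple and isomorphic to $\Al_m$, so \Cref{poly time lib}.\ref{decide if simple} yields an isomorphism $\psi_i\colon S_i\to\Al_m$ onto the natural action of $\Al_m$ on $\{1,\dots,m\}$.

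The heart of the proof is to recover a bijection $\sigma\colon\Omega\to\ksubsets{k}{m}^l$, where $\Omega=\{1,\dots,n\}$. The idea is that in the product action a point $\omega$ corresponds to a tuple $(\delta_1,\dots,\delta_l)$, and the stabiliser of $\omega$ in the $i$-th factor is precisely the stabiliser of the $k$-set $\delta_i$. Concretely, for each $\omega\in\Omega$ and each $i$ I would compute $\mathrm{Stab}_{S_i}(\omega)$ (\Cref{poly time lib}.\ref{compute pt stab}), push it through $\psi_i$, and compute the orbits of $\psi_i(\mathrm{Stab}_{S_i}(\omega))$ on $\{1,\dots,m\}$ (\Cref{poly time lib}.\ref{compute orbits}). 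The setwise stabiliser of a $k$-set in $\Al_m$ has exactly two orbits, of sizes $k$ and $m-k$, and since $k<m/2$ these are distinguishable; I define $\delta_i$ to be the orbit of size $k$ and set $\sigma(\omega)=(\delta_1,\dots,\delta_l)$. If at any point the orbit sizes are not $k$ and $m-k$, the action is not the required one and I report that $H$ is not ample. I would then define $\phi\colon M\to\Sym(\ksubsets{k}{m}^l)$ by letting $\phi(g)$ be the permutation sending $\sigma(\omega)$ to $\sigma(\omega^g)$; this is a homomorphism and $(\phi,\sigma)$ satisfies $\sigma(\omega^g)=\sigma(\omega)^{\phi(g)}$ by construction. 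It remains to verify that $\sigma$ is a bijection, for which injectivity suffices since $|\Omega|=n=|\ksubsets{k}{m}^l|$, and that $\phi(x)\in\Amkl$ for each generator $x$ of $M$, using membership testing against the explicit generators of $\Amkl$ (\Cref{poly time lib}.\ref{membership testing}); as $|M|=|\Amkl|$, this forces $\phi$ to be an isomorphism onto $\Amkl$. All steps run over at most $n$ points and $l\leq n$ factors, so the procedure is polynomial.

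The step I expect to be the chief obstacle is completeness: showing that whenever $\soc{H}$ genuinely is permutation isomorphic to some $\Amkl$, the construction above succeeds. Under such an isomorphism the factors $S_i$ correspond to the $l$ copies of $\Al_m$, in some order, which is harmless since the product action is symmetric in its coordinates, and the library isomorphism $\psi_i$ differs from the correct identification only by an automorphism of $\Al_m$. For $m\neq 6$ every automorphism of $\Al_m$ is induced by conjugation in $\Sy_m$, hence merely relabels $\{1,\dots,m\}$ and preserves the $k$-set structure, so the orbit-of-size-$k$ decoding still yields a valid bijection. The one genuinely delicate case is $m=6$, where the exceptional outer automorphism does not preserve the natural action; there I would, for each factor independently, test the $O(1)$ automorphisms of $\Al_6$ on a single reference point and replace $\psi_i$ by the composite that makes the stabiliser orbits have sizes $k$ and $m-k$. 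Being performed per factor, this costs only a constant factor with no exponential dependence on $l$, and it guarantees that the construction succeeds exactly when $H$ is ample.
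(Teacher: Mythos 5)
Your proposal is correct, and its first half coincides with the paper's proof: compute $\soc{H}$, check all composition factors are $\Al_m$ with $m\geq 5$, find the (unique) $k$ with $\binom{m}{k}^l=n$, split the socle into simple direct factors by iterating the minimal-normal-subgroup/centraliser computations, and apply library isomorphisms to each factor. Where you genuinely diverge is the final, crucial step of upgrading the abstract isomorphism to a permutation isomorphism. The paper does this by appealing to \cite[Lemma~2.7]{normOfPrim} (a black box that tests whether a given abstract isomorphism is induced by a bijection of the domains), applied to $\iota\lambda$ for each of the $2^l\leq n$ coset representatives $\lambda$ of $W=\sym{m}{k}\wr\Sy_l$ in $\Aut(\Amkl)$; the coset enumeration is the paper's global way of absorbing the exceptional outer automorphism of $\Al_6$. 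You instead reconstruct the bijection $\sigma$ explicitly, decoding the $i$-th coordinate of each point $\omega$ as the unique size-$k$ orbit of $\psi_i(\mathrm{Stab}_{S_i}(\omega))$, and you repair the $m=6$ anomaly per factor rather than globally. This is self-contained (no appeal to the cited lemma) and replaces the $2^l$ enumeration by $l$ constant-size searches; the paper's route is shorter on the page because the hard work is outsourced to the citation.

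One point in your completeness argument deserves to be made explicit: why passing the orbit-size test at a \emph{single} reference point guarantees that the decoding succeeds at \emph{every} point. The justification is as follows: under a true permutation isomorphism the subgroups $\mathrm{Stab}_{S_i}(\omega)$, for $\omega$ ranging over all points, map onto the setwise stabilisers of $k$-sets, which form a single conjugacy class of subgroups of $\Al_m$ (as $\Al_m$ is transitive on $k$-sets); since any automorphism of $\Al_m$ permutes conjugacy classes of subgroups, if your corrected $\psi_i$ sends one such stabiliser to a $k$-set stabiliser then it sends all of them to $k$-set stabilisers. Moreover the induced map on $k$-sets then commutes with the group actions in the sense that it normalises $\alt{m}{k}$, so by \Cref{norm of Amkl is wreath} (with $l=1$) it lies in $\sym{m}{k}$; hence your decoded $\sigma$ differs from a true one by an element of $W$, and your final verification (injectivity of $\sigma$ plus membership of the generator images in $\Amkl$) necessarily passes. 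With that argument inserted, your proof is complete and correct.
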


\begin{proof}
By \Cref{poly time lib}.\ref{compute socle}, we can compute a generating set for $S:= \soc{H}$ in polynomial time. 
The group $S$ is a direct product of simple groups, so we can decide whether $S \cong \Al_m^l$, for some $m \geq5$ and $l \geq 1$, by checking if $S$ has $l$ composition factors, each isomorphic to $\Al_m$. 
By \Cref{poly time lib}.\ref{compute compo series}, this can be done in polynomial time. 

If $S$ is isomorphic to $\Al_m^l$, we next decide if there exists a $k$ such that $1 \leq k <m/2$ and $n= {\binom{m}{k}}^l$. If so, we construct an isomorphism $\iota:S \rightarrow \Amkl$ as follows. Initialise $N_1 = S$, then for $2 \leq i \leq l$, we iteratively find a minimal normal subgroup $M_i$ of $N_i$ and take $N_{i+1} = C_{N_i}(M_i)$, in polynomial time by \Cref{poly time lib}.\ref{find a minimal normal subgroup}. Then $M_i \cong \Al_m$ and $N_i = M_i \times C_{N_i}(M_i)$ for each $i$ and so $S = M_1 \times M_2 \times \ldots \times M_{l}$. 
We construct an isomorphism $\iota: S \rightarrow \Amkl$ using an isomorphism from each $M_i$ to a direct factor of $\Amkl$, via isomorphisms to a standard copy of $\Al_m$, in polynomial time by \Cref{poly time lib}.\ref{decide if simple}.

It remains to show how to find a permutation isomorphism between $S$ and $\Amkl$. 
Let $\Delta = \ksubsets{k}{m}^l$ and let $W \leq \Sym(\Delta)$ be as in \Cref{norm of Amkl is wreath}. 
If $m=6$ then there exists an involution $\alpha$ such that $\Aut(\alt{m}{k}) = \langle \sym{m}{k}, \alpha \rangle$, and we can obtain such an $\alpha$ in polynomial time by \Cref{poly time lib}.\ref{decide if simple}. 
So in polynomial time, we can write down all $2^l \leq 2^{\log{n}} = n$ coset representatives of $W$ in $\Aut(\Amkl)$.
We check if $S$ and $\Amkl$ are permutation isomorphic by checking if there exist such a coset representative $\lambda$ and a bijection 
$\sigma: \{1,2, \ldots, n\} \rightarrow \Delta $ such that $(\iota \lambda,\sigma)$ is a permutation isomorphism, in polynomial time by \cite[Lemma~2.7]{normOfPrim}. 
If $m\neq 6$ then $\Aut(\alt{m}{k}) = \sym{m}{k}$ and so $\Aut(\Amkl) =W$, and we may set $\lambda=1$. 
\end{proof}

If $H$ is ample and $l=1$, then $H$ is almost simple. 
The next result considers the case where $H$ is ample and $l>1$. 

\begin{theorem}\label{if normaliser is large prim then it can be computed in quasipolynomial time} 
Given $H = \langle X \rangle \leq \Sy_n$, we can decide if $H$ is ample and not almost simple, and if so compute $N=N_{\Sy_n}(H)$, in time $2^{O(\log{n} \log{\log{n}})}$.
\end{theorem}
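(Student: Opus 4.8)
The plan is to reduce the entire computation to a normaliser computation inside the \emph{tiny} quotient group $C_2 \wr \Sy_l$, which has order only $2^l\,l!$, and to perform that computation by brute force. First I would apply \Cref{can decide if norm is large in poly time} to decide in polynomial time whether $H$ is ample and, if so, to obtain a permutation isomorphism from $\soc{H}$ to $\Amkl$ for some $m,k,l$. If $H$ is not ample, or if $l=1$ (so that $H$ is almost simple), we report this and stop; otherwise $l \ge 2$, so $\soc{H}=(\alt{m}{k})^l$ is a product of at least two nonabelian simple groups and $H$ is not almost simple. After conjugating by the bijection $\sigma$ underlying the permutation isomorphism we may assume $\Sy_n = \Sym(\ksubsets{k}{m}^l)$, with $\Delta := \ksubsets{k}{m}^l$ and $\soc{H} = \Amkl$ acting on $\Delta$. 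Writing $W = \sym{m}{k} \wr \Sy_l$ as in \Cref{norm of Amkl is wreath}, that lemma gives $N_{\Sym(\Delta)}(\soc{H}) = W$. Since $\soc{H}$ is characteristic in $H$ we have $N_{\Sy_n}(H) \le N_{\Sy_n}(\soc{H}) = W$, while $H$ normalises its own socle and so $H \le W$; hence $N_{\Sy_n}(H) = N_W(H)$ and the whole problem now lives inside $W$.

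The key observation is that $B_0 := \soc{H} = (\alt{m}{k})^l$ is a normal subgroup of $W$ that is contained in $H$. Because $B_0 \le H \le N_W(H)$ and $B_0 \trianglelefteq W$, the group $N_W(H)$ is a union of $B_0$-cosets, and the correspondence theorem gives
\[
N_W(H)/B_0 \;=\; N_{\bar W}(\bar H), \qquad \text{where } \bar W := W/B_0 \text{ and } \bar H := H/B_0 .
\]
The base group $(\sym{m}{k})^l$ of $W$ maps onto $(\sym{m}{k}/\alt{m}{k})^l \cong C_2^{\,l}$ and the top group $\Sy_l$ permutes the coordinates, so $\bar W \cong C_2 \wr \Sy_l$, of order $2^l\,l!$. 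This holds uniformly, including when $m=6$: the exceptional outer automorphism of $\Al_6$ does not arise here because \Cref{norm of Amkl is wreath} already pins $N_{\Sym(\Delta)}(\soc{H})$ down to exactly $\sym{m}{k}\wr\Sy_l$.

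Concretely I would realise $\bar W$ as $C_2 \wr \Sy_l$ acting on $2l$ points, together with the quotient map $q \colon W \to \bar W$, which sends $w \in W$ to the permutation it induces on the $l$ simple factors of $\soc{H}$ together with the sign, in each factor, of the induced element of $\sym{m}{k}\cong \Sy_m$; both $q$ and the images $q(X)$ of the generators of $H$ are computable in polynomial time using the factors of $\soc{H}$ found as in the proof of \Cref{can decide if norm is large in poly time}. I would then compute $N_{\bar W}(\bar H)$ by brute force: enumerate all $|\bar W| = 2^l\,l!$ elements $\bar g$ of $\bar W$ and test $\bar g\,\bar H\,\bar g^{-1} = \bar H$ by conjugating the generators of $\bar H$ and testing membership, all inside the small group $\bar W$ (membership in $\bar W$ and $\bar H$ on $2l$ points is handled by \Cref{poly time lib}). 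Finally each generator of $N_{\bar W}(\bar H)$ is lifted to an explicit permutation of $\Delta$ in $W$ (a chosen block permutation times, in the odd coordinates, a fixed odd element of $\sym{m}{k}$), and $N_{\Sy_n}(H)=N_W(H)$ is returned as the group generated by $\soc{H}$ together with these lifts, transported back through $\sigma$.

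For the running time, note that $n = \binom{m}{k}^{\,l} \ge m^l \ge 5^l$, so $l \le \log{n}$ and $|\bar W| = 2^l\,l! \le n\cdot (\log{n})^{\log{n}} = 2^{O(\log{n}\,\log{\log{n}})}$, the $\log\log$ factor entering through Stirling's estimate $\log(l!) = O(l\log{l}) = O(\log{n}\,\log{\log{n}})$. The brute-force search in $\bar W$ therefore costs $|\bar W|^{O(1)} = 2^{O(\log{n}\,\log{\log{n}})}$, and every other step (the ampleness test, constructing $W$, $q$ and the images $q(X)$, and lifting generators back to $\Sym(\Delta)$) is polynomial in $n$ and hence absorbed into this bound. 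The one point that needs care is the quotient reduction itself: checking that $B_0 \trianglelefteq W$ with $B_0 \le H$ really forces $N_W(H)=q^{-1}\!\big(N_{\bar W}(\bar H)\big)$, and that $q$ and its lifts are genuinely computable as permutations of $\{1,\dots,n\}$. Once that is in place, the only substantive computation is an exhaustive normaliser search in a group that is small enough to afford it.
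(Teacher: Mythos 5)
Your proof is correct, and its skeleton matches the paper's: both decide ampleness via \Cref{can decide if norm is large in poly time}, both use \Cref{norm of Amkl is wreath} together with the fact that $\soc{H}$ is characteristic in $H$ to reduce everything to a computation inside (the $\sigma$-transported copy of) $W = \sym{m}{k} \wr \Sy_l$, and both extract the quasipolynomial bound from $[W:\soc{H}] \le 2^l\, l! \le 2^{O(\log{n}\log{\log{n}})}$ with $l \le \log{n}$. Where you genuinely differ is the final search. The paper black-boxes it: it uses \cite[Lemma~4.3]{normOfPrim} to write down $M = N_{\Sy_n}(\soc{H})$ on four generators and then cites \cite[Lemma~4.4]{normOfPrim}, which computes $N = N_M(H)$ in time $O(n^3[M:H]^2)$ by working with cosets of $H$ in $M$ inside $\Sy_n$ (noting that the primitivity hypothesis there is not needed in its proof). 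You instead make this step self-contained: since $B_0 = \soc{H}$ is normal in $W$ and contained in $H$, the correspondence theorem gives $N_W(H) = q^{-1}(N_{\bar W}(\bar H))$ with $\bar W \cong C_2 \wr \Sy_l$ acting on $2l \le 2\log{n}$ points, and you brute-force the normaliser there and lift back. Your route buys independence from \cite[Lemma~4.4]{normOfPrim} and performs the per-element conjugacy tests in degree $2l$ rather than $n$; the price is having to construct the quotient map $q$ and the lifts explicitly, which your sketch correctly reduces to extracting, for each $w \in W$, its coordinate permutation and the per-coordinate membership test in $\alt{m}{k}$ — all polynomial time. Two minor housekeeping points: your generating set for $N_W(H)$ (the lifts together with generators of $\soc{H}$) may have size $2^{O(\log{n}\log{\log{n}})}$, so reduce it to at most $n$ generators via \Cref{poly time lib}.\ref{replace by small gen set}, which stays within the time bound; and in the conjugation check it suffices to verify $\bar g \bar H \bar g^{-1} \subseteq \bar H$ on generators, as finiteness upgrades containment to equality.
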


\begin{proof}
By \Cref{can decide if norm is large in poly time}, in polynomial time, we can check if $H$ is ample and not almost simple, and if so obtain 
a permutation isomorphism $(\phi, \sigma)$ from $\soc{H}$ to $\Amkl$.  

We first show that we can compute a generating set for $\normOfSoc = N_{\Sy_n}(\soc{H})$ of size at most four in polynomial time. 
Let $W$ be as in \Cref{norm of Amkl is wreath}. 
The bijection $\sigma^{-1}$ induces an isomorphism from $\Sym(\ksubsets{k}{m}^l)$ to $\Sy_n$ that maps $W$ to $\normOfSoc$. 
By \cite[Lemma~4.3]{normOfPrim}, we can write down a generating set $Z$ for $W$ of size four, so $\normOfSoc = \langle \sigma^{-1}(Z) \rangle$ can be computed in polynomial time by \Cref{poly time lib}.\ref{compute im and preim of homom}. 

Next, since $\normOfSoc$ is isomorphic to $W$, 
\[
[\normOfSoc:\soc{H}] \leq 2^l |\Sy_l| \leq 2^l l^l = 2^{l + l \log{l}}. 
\]
As $l \leq \log{n}$, it follows that $[\normOfSoc:\soc{H}]  \leq  2^{\log{n}+ \log{n} \log{\log{n}}}$. 
Notice that $\soc{H} \trianglelefteq H$, so $H \leq  M$. 
Therefore $[\normOfSoc: H] \leq [\normOfSoc: \soc{H}] \leq 2^{2\log{n} \log{\log{n}}}$. 

By \cite[Lemma~4.4]{normOfPrim}, $N$ can be computed in time $O(n^3 [\normOfSoc: H]^2) = 2^{O(\log{n}\log{\log{n}})}$ (it is assumed in \cite{normOfPrim} that $H$ is primitive, but the assumption is not needed in the proof).  
\end{proof}

%%%%%%%%%%%%%%%%%%%%%%%%%%%%%%%%%%%%%%%%%%%%%%%%%%%%%%%%%%%%%%%%%%%%%%%%%%%%%%%%%%%%%%%%%%%%%%

We end by giving the proof of \Cref{main theorem}. 

\begin{proof}[Proof of \Cref{main theorem}]
We first prove \Cref{decide if norm in prim}. Without loss of generality, suppose that $H$ is non-trivial. 
We check if $H$ is transitive, in polynomial time by \Cref{poly time lib}.\ref{compute orbits}. If not, then $N=N_{\Sy_n}(H)$ is not primitive, and we return false. 

Otherwise, by \cite[Lemma~4.5]{normOfPrim}, in polynomial time, we can check if $H$ is almost simple and if so compute $N$. Assume from now that $H$ (and hence $N$) is not almost simple.
Next, by \Cref{if normaliser is large prim then it can be computed in quasipolynomial time}, in time $2^{O(\log{n} \log{\log{n}})}$, we can determine if $H$ is ample and if so, compute $N$.  

If $H$ is not ample, then by \Cref{G large prim then H large}, $N$ is not large. 
So by \Cref{big small and sporadic primitive groups}, $N$ is primitive if and only if  $N$ is small. 
We check if $|H| \leq n^{1+ \floor{\log{n}}}$ in polynomial time by \Cref{poly time lib}.\ref{compute order} and return false if not. Next we look for a generating set of size at most $\ceil{\log{n}}$ and a base of size at most $\ceil{\log{n}} + 1$ for $H$ in time $2^{O(\log^3{n})}$ by \Cref{get small gen set and base for $H$}.\ref{small gen set in quasipoly time}--\ref{small base in quasipoly time}. 
If no such base and generating set exist, then by \Cref{get small gen set and base for $H$}.\ref{if norm small prim then gent small gen set and small base in quasipoly time}, $N$ is not primitive, and we return false. Otherwise we compute $N$ in time $2^{O(\log^3{n})}$ by \cite[Theorem~3.3]{normOfPrim}, and check if $N$ is primitive, in polynomial time by \Cref{poly time lib}.\ref{check if primitive}. 

\Cref{if norm prim then compute norm} now follows from the fact that, given $N$, the group $N_K(H) = K \cap N$ can be computed in time $2^{O(\log^3{n})}$, by Babai and Helfgott's results \cite{babaiGI, helfgottGI, luksHierarchy}.  
\end{proof}

\section*{Acknowledgements}
The first author is supported by a Royal Society grant (RGF\textbackslash EA\textbackslash181005).

\bibliographystyle{abbrv}

\end{document}